\numberwithin{equation}{section}
\newcommand{\calA}{\mathcal{A}}
\newcommand{\calG}{\mathcal{G}}
\newcommand{\calL}{\mathcal{L}}
\newcommand{\mC}{\mathbb{C}}
\newcommand{\mS}{\mathbb{S}}
\newcommand{\mT}{\mathbb{T}}
\newtheorem{theorem}{Theorem}[section]
\newtheorem{lemma}[theorem]{Lemma}
\newtheorem{proposition}[theorem]{Proposition}
\theoremstyle{definition}
\theoremstyle{definition}
\newtheorem{definition}[theorem]{Definition}
\theoremstyle{definition}
\newtheorem{notation}[theorem]{Notation}
\begin{document}

\keywords{Riccati equations, Banach algebras, Systems over rings, 
$H^\infty$ control, Spatially distributed dynamical systems}

\subjclass{Primary 46J05; Secondary 93D15, 58C15, 47N20}

\title[Riccati equations in Banach algebras]{Solvability of the $H^\infty$
  algebraic Riccati equation in Banach algebras}

\author{Amol Sasane}
\address{Department of Mathematics, London School of Economics,
     Houghton Street, London WC2A 2AE, United Kingdom.}
\email{A.J.Sasane@lse.ac.uk}

\begin{abstract}
  Let $R$ be a commutative complex unital semisimple Banach algebra
  with the involution $\cdot ^\star$. Sufficient conditions are given
  for the existence of a stabilizing solution to the $H^\infty$
  Riccati equation when the matricial data has entries from $R$.
  Applications to spatially distributed systems are discussed.
\end{abstract}

\maketitle

\section{Introduction}

In the standard (regular full information) $H^\infty$ control problem,
one wants to find an appropriate control input $u$ such that the
effect of the disturbance $w$ on the output $z$ is minimized for the
control system given by the following dynamics:
\begin{eqnarray}
\label{eq_system_a}
x'(t)&=& Ax(t)+Bu(t)+Ew(t),\\ 
\label{eq_system_b}
z(t) &=& Cx(t)+D_1u(t)+D_2w(t),
\end{eqnarray}
where $x(t)\in \mC^n$, $u(t)\in \mC^m$, $w(t)\in \mC^\ell$, $z(t)\in
\mC^p$, and $A,B,E,C,D_1,D_2$ are complex matrices of appropriate
sizes. More precisely, one seeks a feedback law
$$
u(t)=F x(t)
$$
such that the closed loop system obtained in this manner is internally
stable and its transfer function has $H^\infty$ norm strictly less
than some a priori given bound $\gamma>0$. The following result is
well-known; see for example \cite[Theorem~3.1, p.49]{Sto}:

\begin{theorem}
  Consider \eqref{eq_system_a}-\eqref{eq_system_b}, and let
  $\gamma>0$. Suppose that the tuple $(A,B,C,D_1)$ has no invariant
  zeros on the imaginary axis, and that $\ker D_1=\{0\}$. Then the
  following are equivalent:
\begin{enumerate}
\item There exists a matrix $F$ such that the closed loop system
  obtained by applying the feedback $u(t)=F x(t)$ is
  internally stable, and the $H^\infty$ norm of the closed loop
  transfer function is strictly less than $\gamma$.
\item $D_2^* D_2 <\gamma^2 I$. Moreover,
  there exists a positive semi-definite solution of the algebraic
  Riccati equation
\begin{equation}
\label{eq_riccati_classical}
0=A^* P+ PA+C^*C-\left[\begin{array}{cc} B^*P+D_1^* C\\E^*P+D_2^* C \end{array}\right]^* 
\left[\begin{array}{cc} D_1^*D_1 & D_1^* D_2 \\ D_2^* D_1 & D_2^* D_2 -\gamma^2 I \end{array}\right]^{-1} 
\left[\begin{array}{cc} B^*P+D_1^* C\\E^*P+D_2^* C \end{array}\right],
\end{equation}
such that $A_{\textrm{cl}}$ is exponentially stable, that is, $\textrm{\em
  Re}(\lambda)<0$ for all eigenvalues $\lambda$ of $ A_{\textrm{cl}}$, where 
\begin{equation}
\label{eq_closed_loop_A}
A_{\textrm{\em cl}}:= A-\left[\begin{array}{cc} B & E
  \end{array}\right]\left[\begin{array}{cc} D_1^*D_1 & D_1^* D_2 \\
    D_2^* D_1 & D_2^* D_2 -\gamma^2 I \end{array}\right]^{-1}
\left[\begin{array}{cc} B^*P+D_1^* C\\E^*P+D_2^* C \end{array}\right].
\end{equation}
\end{enumerate}
If $P$ satisfies the conditions in (2) above, then the matrix $F$
satisfying (1) can be taken as
$$
F:= - (D_1^* (I-\gamma^{-2} D_2 D_2^*)^{-1} D_1)^{-1} (D_1^*
C+B^*P+D_1^* D_2 (\gamma^2 I-D_2^* D_2)^{-1} (D_2^* C+E^*P)).
$$
\end{theorem}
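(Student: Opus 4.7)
The plan is to establish the equivalence in two steps via the bounded real lemma combined with a completion-of-squares argument, and then to verify the explicit form of $F$ by direct computation. The hypothesis that $(A,B,C,D_1)$ has no invariant zeros on $i\mR$, together with $\ker D_1=\{0\}$, will guarantee left-invertibility of the auxiliary system used to construct the stabilizing solution, so the Riccati equation admits a solution with the desired spectral property.

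For (1) $\Rightarrow$ (2), I would first observe that the closed-loop transfer function from $w$ to $z$ under $u=Fx$ equals $D_2$ at $s=\infty$, so $\|T_{zw}\|_\infty<\gamma$ forces $\|D_2\|<\gamma$, i.e.\ $D_2^*D_2<\gamma^2 I$. Under this condition the middle matrix in \eqref{eq_riccati_classical} is invertible, since its $(2,2)$-block $D_2^*D_2-\gamma^2 I$ is negative definite while the Schur complement $D_1^*(I-\gamma^{-2}D_2D_2^*)^{-1}D_1$ is positive definite by $\ker D_1=\{0\}$. Next, I would invoke the bounded real lemma (applied to the closed loop) to extract a positive semi-definite storage function $P$ satisfying a dissipation inequality. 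The indefinite game-theoretic maximization over $w$ and minimization over $u$ tightens this inequality into equation \eqref{eq_riccati_classical}; the invariant-zero hypothesis ensures the minimizing and maximizing feedbacks are well-defined and that the resulting $P$ is stabilizing in the sense of the spectral condition on $A_{\mathrm{cl}}$ in \eqref{eq_closed_loop_A}.

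For (2) $\Rightarrow$ (1), I would define $F$ by the displayed formula and establish a completion-of-squares identity, namely that for every $T>0$ and every trajectory of \eqref{eq_system_a}--\eqref{eq_system_b} with $x(0)=0$ there exist matrices $R,L,M$ and a vector-valued $w^*$ constructed from the Riccati data such that
\[
\int_0^T\!\bigl(\|z(t)\|^2-\gamma^2\|w(t)\|^2\bigr)dt=\langle x(T),Px(T)\rangle-\!\int_0^T\!\|R^{1/2}(u-Fx-Lw)\|^2 dt+\!\int_0^T\!\|M(w-w^*(t))\|^2 dt.
\]
Setting $u=Fx$ makes the middle term vanish. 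Combining this identity with $D_2^*D_2<\gamma^2 I$ and the stabilizing property of $P$ (which yields internal stability and $x(T)\to 0$) one gets $\|T_{zw}\|_\infty<\gamma$. The explicit expression for $F$ is then read off from the stationarity condition of the integrand in $u$.

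The main obstacle is the forward construction of the stabilizing solution $P$: one must show that the Hamiltonian-type pencil associated with \eqref{eq_riccati_classical} has an $n$-dimensional stable deflating subspace that is graph-representable as $\{(x,Px):x\in\mC^n\}$. This is precisely where the absence of invariant zeros on $i\mR$ and $\ker D_1=\{0\}$ enter, via a transformation that reduces the indefinite $H^\infty$ Riccati equation to a standard LQ Riccati equation for which the classical theory (Wonham, Willems) provides the unique stabilizing solution. Everything else in the argument is algebraic manipulation or a direct application of the bounded real lemma.
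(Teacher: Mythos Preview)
The paper does not actually prove this theorem: it is stated as a well-known classical result and simply cited from Stoorvogel's monograph \cite{Sto} (specifically \cite[Theorem~3.1, p.49]{Sto}). So there is no ``paper's own proof'' against which to compare your attempt.

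That said, your sketch is in line with the standard argument one finds in the cited reference and related literature: the feedthrough observation giving $D_2^*D_2<\gamma^2 I$, the bounded real lemma combined with a game-theoretic min--max over $(u,w)$ to obtain the Riccati equation, the Hamiltonian/stable-subspace construction for existence of the stabilizing $P$ (where the invariant-zero and $\ker D_1=\{0\}$ hypotheses enter), and completion of squares for the reverse implication. Your outline is therefore appropriate as a recollection of the classical proof, which is all that is needed here since the paper treats the result as background.
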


The interest in systems over rings has recently been revived owing to
potential applications for control theoretic problems of spatially
invariant systems \cite{BamPagDah}.  In particular, a natural question
that arises is the following: if the matricial data in the algebraic
Riccati equation has entries from a Banach algebra $R$, then does
there exist a solution matrix $P$ which also has entries from $R$?  In
the context of the algebraic Riccati equation associated with the
problem of optimal control for a linear control system with a
quadratic cost (LQ problem), this was done recently in \cite{CurSas}.
In this article, we continue our study, and show that an analogous
result can also be established for the $H^\infty$ algebraic Riccati
equation. Although the method of proof is identical to the one
followed in \cite{CurSas}, the result in the current article does not
follow from \cite{CurSas}, and it is not automatic. There are salient
differences in the proof:
\begin{enumerate}
\item One of the key differences is that the quadratic term of the
  form $P^*QP$ in the algebraic Riccati equation has a {\em positive
    semi-definite} matrix $Q$ in the LQ problem, whereas the matrix
  $Q$ is {\em indefinite} in the $H^\infty$ control problem. In
  particular, this manifests in that we need to prove the technical
  result (Proposition~\ref{prop2}) on the continuous dependence of the
  stabilizing solution of the $H^\infty$ algebraic Riccati equation on
  the data.
\item The second new feature is the presence of the {\em  inverse} of a
  matrix with feedthrough terms $D_1,D_2$ in the $H^\infty$
  algebraic Riccati equation. This results in an extra complication in
  the application of the inverse function theorem for Banach algebras,
  namely, the application of Proposition~\ref{prop3}.
\end{enumerate}

\noindent We begin by fixing some notation.

\begin{notation}
  Throughout the article, $R$ will denote a commutative, unital,
  complex, semisimple Banach algebra, which possesses an involution
  $\cdot^\star$. 

  On the other hand, the usual adjoint of a matrix $M=[m_{ij}]\in
  \mC^{p\times m}$ will be denoted by $M^* \in \mC^{m\times p}$, that
  is, $ M^* =[\overline{m_{ji}}]$.

$M(R)$ will denote the maximal ideal space of $R$, equipped with the
weak-$\ast$ topology. For $x\in R$, we will denote its Gelfand
transform by $\widehat{x}$, that is, 
$$
\widehat{x}(\varphi)=\varphi(x), \; \varphi \in M(R),\; x\in R.
$$
For a matrix $M\in R^{p\times m}$, whose entry in the $i$th row and
$j$th column is denoted by $m_{ij}$, we define $M^\star\in R^{m\times
  p}$ to be the matrix whose entry in the $i$th row and $j$th column
is $m_{ji}^\star$. 

Also by $\widehat{M}$ we mean the $p\times m$
matrix, whose entry in the $i$th row and $j$th column is the
continuous function $\widehat{m_{ij}}$ on $M(R)$. Summarizing, if
$M=[m_{ij}]\in R^{p\times m}$, then
\begin{eqnarray*}
M^\star 
&=&  
\Big[m_{ji}^\star \Big]\in R^{m\times p},
\\
\widehat{M} 
&=& 
\Big[\widehat{m_{ij}}\Big]\in \Big(C(M(R);\mC)\Big)^{p\times m},
\\
\Big(\widehat{M}(\varphi)\Big)^* 
&=&
\Big[\overline{\widehat{m_{ji}}(\varphi)}\Big]\in \mC^{p\times m}.
\end{eqnarray*}
\end{notation}

Our main result is the following. 

\begin{theorem}
\label{main_theorem}
Let $A\in R^{n\times n}$, $B\in R^{n\times m}$, $C\in R^{p\times n}$,
$D_1\in R^{p\times m}$, $D_2 \in R^{p\times \ell}$, $E\in R^{n\times
  \ell}$. Suppose that for all $\varphi\in M(R)$,
\begin{itemize}
\item[(A1)] $\widehat{(A^\star)}(\varphi)= (\widehat{A}(\varphi))^*$,
  $\widehat{(B^\star)}(\varphi)= (\widehat{B}(\varphi))^*$,
  $\widehat{(C^\star)}(\varphi)= (\widehat{C}(\varphi))^*$,
  $\widehat{(D_1^\star)}(\varphi)= (\widehat{D_1}(\varphi))^*$,
  $\widehat{(D_2^\star)}(\varphi)= (\widehat{D_2}(\varphi))^*$,
  $\widehat{(E^\star)}(\varphi)= (\widehat{E}(\varphi))^*$, 
\item[(A2)]
  $(\widehat{A}(\varphi),\widehat{B}(\varphi),\widehat{C}(\varphi),\widehat{D_1}(\varphi))$
  is left invertible,
\item[(A3)] there exist matrices $F_1(\varphi),F_2(\varphi)$ such that
  $\widehat{A}(\varphi)+\widehat{B}(\varphi)F_1(\varphi)$ is
  exponentially stable, and  we have (suppressing the argument $\varphi$) 
$$
\|(\widehat{C}+\widehat{D_1}F_1)(sI-\widehat{A}-\widehat{B}F_1)^{-1}
(\widehat{E}+\widehat{B}F_2)+\widehat{D_2}+\widehat{D_1}F_2\|_\infty
<\gamma,
$$ 
\item[(A5)] 
  $(\widehat{A}(\varphi),\widehat{B}(\varphi),\widehat{C}(\varphi),\widehat{D_1}(\varphi))$
  has no invariant zeros on the imaginary axis, and 
\item[(A6)]  $\ker( \widehat{D_1}(\varphi))=\{0\}$.
\end{itemize}
Then there exists a $P\in R^{n\times n}$ such that 
\begin{enumerate}
\item $ 0=A^\star P+PA + C^\star C$

$\quad -\left[\begin{array}{cc} P B+C^\star D_1 & 
P E + C^\star D_2 \end{array}\right] 
\left[\begin{array}{cc} D_1^\star D_1 & D_1^\star D_2 \\ 
D_2^\star D_1 & D_2^\star D_2 -\gamma^2 I \end{array}\right]^{-1} 
\left[\begin{array}{cc} B^\star P+ D_1^\star C\\
E^\star P+ D_2^\star C \end{array}\right]$,
\item The matrix 
$$
A_{\textrm{\em cl}}:=A-\left[\begin{array}{cc} B & E
  \end{array}\right]\left[\begin{array}{cc} D_1^\star D_1 & D_1^\star D_2 \\
    D_2^\star D_1 & D_2^\star D_2 -\gamma^2 I \end{array}\right]^{-1}
\left[\begin{array}{cc} B^\star P+D_1^\star C\\E^\star P+D_2^\star C \end{array}\right]
$$ 
is exponentially stable, and
\item for all $\varphi\in M(R)$, $\widehat{P}(\varphi)$ is positive
  semidefinite. $\phantom{\left[\begin{array}{cc} D_1^\star \\
        D_2^\star \end{array}\right]}$
\end{enumerate}
\end{theorem}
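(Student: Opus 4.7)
The plan is to mimic the strategy of \cite{CurSas}: first produce the solution pointwise on $M(R)$ via the scalar theorem, then show that the pointwise-defined matrix function is continuous on $M(R)$, and finally lift it back to an element of $R^{n\times n}$ via an inverse-function-theorem argument in the Banach algebra.

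\emph{Step 1: Pointwise existence.} Fix $\varphi\in M(R)$. The hypotheses (A1), (A5), (A6) say exactly that the complex tuple $(\widehat{A}(\varphi),\widehat{B}(\varphi),\widehat{C}(\varphi),\widehat{D_1}(\varphi))$ satisfies the assumptions of the classical theorem quoted from \cite{Sto}, with the matricial adjoint coming from $^\star$ via (A1). Hypothesis (A3) supplies a stabilizing feedback $(F_1(\varphi),F_2(\varphi))$ achieving closed-loop $H^\infty$-norm strictly less than $\gamma$, which is item (1) of that theorem; in particular the invertibility in (2) of the $2\times 2$ block matrix, and $\widehat{D_2}(\varphi)^*\widehat{D_2}(\varphi)<\gamma^2 I$, follow. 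Hence there is a unique positive semidefinite stabilizing solution $P(\varphi)\in\mC^{n\times n}$ to the scalar Riccati equation (and a corresponding stable closed-loop matrix $A_{\textrm{cl}}(\varphi)$).

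\emph{Step 2: Continuity of $\varphi\mapsto P(\varphi)$.} Since the Gelfand transforms of the coefficients are continuous on the compact Hausdorff space $M(R)$, I would invoke Proposition~\ref{prop2} to conclude that the stabilizing solution of the $H^\infty$ Riccati equation depends continuously on the matricial data. The subtlety here — and the reason Proposition~\ref{prop2} is needed as a separate result, not imported from \cite{CurSas} — is that the quadratic coefficient $Q$ in the $H^\infty$ Riccati equation is indefinite, so standard monotonicity arguments for LQ Riccati equations do not apply; continuity instead follows from implicit-function / perturbation analysis on the closed-loop Hamiltonian spectrum, using that $A_{\textrm{cl}}(\varphi)$ stays hyperbolic.

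\emph{Step 3: Lifting to $R^{n\times n}$.} The continuous matrix-valued function $\varphi\mapsto P(\varphi)$ on $M(R)$ is the candidate Gelfand transform of the desired $P\in R^{n\times n}$. To realize it, define the Riccati map $\mathcal{F}:U\to R^{n\times n}$ on the open set $U\subset R^{n\times n}$ where the invertibility of the $(m+\ell)\times(m+\ell)$ feedthrough matrix (with $D_2^\star D_2-\gamma^2 I$ in the bottom block) makes sense, by sending $P$ to the right-hand side of the Riccati equation. I would then apply Proposition~\ref{prop3} (the inverse-function-theorem transfer principle for Banach algebras) to $\mathcal{F}$: at the pointwise stabilizing solution the Fr\'echet derivative of $\mathcal{F}$ is, on each fibre $\varphi$, the Lyapunov operator $X\mapsto A_{\textrm{cl}}(\varphi)^* X+X A_{\textrm{cl}}(\varphi)$, which is invertible on $\mC^{n\times n}$ because $A_{\textrm{cl}}(\varphi)$ is exponentially stable. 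A continuity-plus-compactness argument on $M(R)$ combined with the Banach-algebra inverse-function theorem then yields $P\in R^{n\times n}$ with $\widehat{P}(\varphi)=P(\varphi)$.

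\emph{Step 4: Verification.} Once $P\in R^{n\times n}$ is in hand, items (1) and (3) follow because both sides of the Riccati identity, and positive semidefiniteness, hold at every $\varphi\in M(R)$ and $R$ is semisimple (so the Gelfand transform is injective on $R^{n\times n}$). Item (2) follows from the pointwise stability of $A_{\textrm{cl}}(\varphi)$ together with the standard spectral characterisation of exponential stability for matrices over $R$ (via $M(R)$).

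\emph{Main obstacles.} The two genuine difficulties flagged in the introduction are exactly the ones on the critical path: (i) establishing Proposition~\ref{prop2}, since the indefinite quadratic term removes the monotonicity tools available in \cite{CurSas}, and (ii) arranging Step~3 so that the Banach-algebra inverse function theorem applies despite the presence of the inverse of the feedthrough block — this requires showing that block is invertible not only pointwise but as an element of $R^{(m+\ell)\times(m+\ell)}$, which is where Proposition~\ref{prop3} does the heavy lifting.
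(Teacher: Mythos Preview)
Your proposal is correct and follows essentially the same route as the paper: pointwise existence via the classical $H^\infty$ Riccati theorem, continuity via Proposition~\ref{prop2}, lifting to $R^{n\times n}$ via Proposition~\ref{prop3} with the Jacobian identified as the Lyapunov operator $\Theta\mapsto (\calA_{\textrm{cl}}(\varphi))^*\Theta+\Theta\,\calA_{\textrm{cl}}(\varphi)$ (invertible because $\calA_{\textrm{cl}}(\varphi)$ is stable), and verification through semisimplicity and Proposition~\ref{prop_char_exp_stab}. The only point the paper makes explicit that you leave implicit is that pointwise stability of $\widehat{A_{\textrm{cl}}}(\varphi)$ must be upgraded to the uniform bound $\sup_{\varphi}\textrm{Re}(\lambda)<0$ required by Proposition~\ref{prop_char_exp_stab}, which follows from continuity of the eigenvalue map and compactness of $M(R)$; also, for Proposition~\ref{prop3} itself one does not need the feedthrough block to be invertible in $R^{(m+\ell)\times(m+\ell)}$, only that its inverse is holomorphic on the joint spectrum, which is automatic from pointwise invertibility.
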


\noindent In the following we define what is meant by ``exponentially stable''
in the conclusion (2) in the above result.

\begin{definition}
Let $R$ be a commutative, unital, complex, semisimple Banach 
algebra. If $A\in R^{n\times n}$, then let $M_A:R^n\rightarrow R^n $ be the
multiplication map by the matrix $A$, that is, $v\mapsto Av$ ($v\in
R^n$).  Then $R^{n\times n}$ is a unital complex Banach algebra (for
example) with the norm
$$
\|A\|:=\|M_A\|_{\calL(R^n)}\quad (A\in R^{n\times n})
$$
where $\calL(R^n)$ denotes the set of all continuous linear
transformations from $R^n$ to $R^n$, and $R^n$ is the Banach space
equipped (for example) with the norm
$$
\|x\|=\max\{\|x_k\|:1\leq k\leq n\} \textrm{ for } 
x=\left[\begin{array}{c} x_1\\\vdots\\ x_n\end{array}\right],
$$
and $\calL(R^n)$ is equipped with the usual operator norm:
$$
\|M_A\|_{\calL(R^n)}=\sup\{ \|Av\|:v\in R^n \textrm{ with }\|v\|\leq 1\}.
$$
For $A\in R^{n\times n}$, we define
$$
e^{A}=\sum_{k=0}^\infty \frac{1}{k!}A^k.
$$
The absolute convergence of this series is established just as in the
scalar case.

$A\in R^{n\times n}$ is said to be {\em exponentially stable} if there
exist positive constants $C$ and $\epsilon$ such that 
$$
\|e^{tA}\|\leq Ce^{-\epsilon t} \textrm{ for all } t \geq 0.
$$
\end{definition}

We recall the following two results \cite[Lemma~1.6, Prop.1.7]{CurSas}.

\begin{lemma}
\label{lemma_spectra}
Let $A\in R^{n\times n}$. Then the following are equivalent:
\begin{enumerate}
\item $\lambda$ belongs to the spectrum of $A\in R^{n\times n}$.
\item $\lambda$ belongs to the spectrum of $M_A\in \calL(R^n)$.
\item $\lambda$ belongs to the spectrum of $\widehat{A}(\varphi)$ for some $\varphi \in M(R)$. 
\end{enumerate}
\end{lemma}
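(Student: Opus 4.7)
The plan is to establish the equivalences (1) $\Leftrightarrow$ (3) and (1) $\Leftrightarrow$ (2) separately. Upon replacing $A$ by $\lambda I - A$, each reduces to a statement about invertibility of a matrix, of the associated multiplication operator, and of the pointwise Gelfand transforms.

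For (1) $\Leftrightarrow$ (3), I would exploit that $R$ is commutative: the adjugate identity $M\cdot\mathrm{adj}(M)=\det(M)\cdot I$ shows that $M\in R^{n\times n}$ is invertible in the matrix ring if and only if $\det(M)\in R$ is a unit. Gelfand theory for commutative unital Banach algebras then characterizes units: $\det(\lambda I-A)$ is invertible in $R$ iff $\widehat{\det(\lambda I-A)}(\varphi)\neq 0$ for every $\varphi\in M(R)$. Since each character $\varphi$ is a unital algebra homomorphism and the determinant is a polynomial in the matrix entries, one has $\widehat{\det(\lambda I-A)}(\varphi)=\det(\lambda I-\widehat{A}(\varphi))$, so the non-vanishing condition translates exactly to $\lambda\notin\sigma(\widehat{A}(\varphi))$ for every $\varphi$.

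For (1) $\Leftrightarrow$ (2), the easy direction uses that $B\mapsto M_B$ is a unital algebra homomorphism $R^{n\times n}\to\calL(R^n)$, so invertibility transfers from the matrix ring to the operator algebra. The converse is the only mildly non-routine part: given $M_A^{-1}\in\calL(R^n)$, I would construct a candidate inverse matrix by setting its $j$-th column to $b_j:=M_A^{-1}(e_j)$, where $e_1,\dots,e_n$ is the standard $R$-basis of the free module $R^n$. Assembling these into $B\in R^{n\times n}$, we obtain $M_A(Be_j)=e_j$ for each $j$, so $AB=I$ in $R^{n\times n}$; then $M_AM_B=I$ as operators on $R^n$, and left-multiplying by $M_A^{-1}$ forces $M_B=M_A^{-1}$, whence $M_BM_A=I$ and $BA=I$.

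The only genuinely non-formal step, and thus the main (minor) obstacle, is reconstructing the inverse matrix from the operator inverse in the last paragraph; everything else is a direct combination of the adjugate identity, Gelfand's theorem on units in commutative unital Banach algebras, and the fact that characters commute with the polynomial expressions in the matrix entries that define $\det(\lambda I-A)$.
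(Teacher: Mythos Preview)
Your argument is correct. The equivalence (1) $\Leftrightarrow$ (3) via the adjugate identity and Gelfand's characterization of units is the standard route, and your observation that $\varphi(\det(\lambda I-A))=\det(\lambda I-\widehat{A}(\varphi))$ because $\varphi$ is a unital ring homomorphism is exactly what is needed. For (1) $\Leftrightarrow$ (2), your reconstruction of the inverse matrix from $M_A^{-1}$ by applying it to the standard basis vectors is clean; note also that over a commutative ring the one-sided inverse $AB=I$ already forces $BA=I$ (since $\det A$ becomes a unit and the adjugate gives a two-sided inverse), so the detour through $M_B=M_A^{-1}$ is not strictly necessary, though it is perfectly valid.

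As for comparison with the paper: the present paper does not supply its own proof of this lemma but merely quotes it from \cite{CurSas}. Your argument is the natural one and is essentially what one would expect to find in that reference; the adjugate/determinant reduction together with the Gelfand spectrum characterization is the canonical proof in this setting.
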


\begin{proposition}
\label{prop_char_exp_stab}
Let $A\in R^{n\times n}$. Then $A$ is exponentially stable if and only if
$$
\sup \{ \textrm{\em Re}(\lambda):\lambda \textrm{ is an eigenvalue of }
\widehat{A}(\varphi) \textrm{ for some } \varphi \in M(R)\} <0.
$$
\end{proposition}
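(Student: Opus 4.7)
The plan is to identify the supremum appearing in the statement with the spectral abscissa of $A$, regarded as an element of the unital Banach algebra $R^{n\times n}$, and then to deduce exponential stability from the spectral mapping theorem for the exponential together with the Beurling--Gelfand spectral radius formula. By Lemma~\ref{lemma_spectra}, the spectrum of $A$ in $R^{n\times n}$ coincides with $\bigcup_{\varphi\in M(R)}\sigma(\widehat{A}(\varphi))$, and each $\sigma(\widehat{A}(\varphi))$ is just the (finite) set of eigenvalues of the complex $n\times n$ matrix $\widehat{A}(\varphi)$. Consequently the quantity to be controlled is nothing other than the spectral abscissa
$$
s(A)\;:=\;\sup\bigl\{\textrm{Re}(\lambda):\lambda\in\sigma_{R^{n\times n}}(A)\bigr\},
$$
and the task reduces to proving that $A$ is exponentially stable if and only if $s(A)<0$.

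For the ``only if'' direction, assume $\|e^{tA}\|\leq Ce^{-\epsilon t}$ for all $t\geq 0$. The spectral mapping theorem applied to the entire function $z\mapsto e^{tz}$ in the Banach algebra $R^{n\times n}$ yields $\sigma(e^{tA})=e^{t\sigma(A)}$, so for every $\lambda\in\sigma(A)$ one has $e^{t\,\textrm{Re}(\lambda)}=|e^{t\lambda}|\leq r(e^{tA})\leq \|e^{tA}\|\leq Ce^{-\epsilon t}$. Taking logarithms, dividing by $t$, and sending $t\to\infty$ gives $\textrm{Re}(\lambda)\leq -\epsilon$, hence $s(A)\leq -\epsilon<0$.

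For the ``if'' direction, assume $s(A)=-\delta<0$, and fix any $\delta'\in(0,\delta)$ and any $t_0>0$. By the same spectral mapping, $r(e^{t_0A})=e^{-t_0\delta}$, so the spectral radius formula in $R^{n\times n}$ gives
$$
\lim_{k\to\infty}\bigl\|e^{kt_0A}\bigr\|^{1/k}\;=\;e^{-t_0\delta}\;<\;e^{-t_0\delta'},
$$
so $\|e^{kt_0A}\|\leq e^{-kt_0\delta'}$ for all sufficiently large $k$. For arbitrary $t\geq 0$, write $t=kt_0+s$ with $k\in\mZ_{\geq 0}$ and $s\in[0,t_0)$; since $e^{sA}$ and $e^{kt_0A}$ commute in the power-series algebra generated by $A$, submultiplicativity yields $\|e^{tA}\|\leq \|e^{sA}\|\cdot\|e^{kt_0A}\|\leq M\,e^{-kt_0\delta'}$, where $M:=\sup_{s\in[0,t_0]}\|e^{sA}\|<\infty$ by continuity of $s\mapsto e^{sA}$. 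Absorbing the fixed constants into a new $C'$ produces the bound $\|e^{tA}\|\leq C'e^{-\delta' t}$, which is exponential stability.

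The only nontrivial analytic input is the spectral mapping theorem $\sigma(e^{tA})=e^{t\sigma(A)}$ in the (noncommutative) unital Banach algebra $R^{n\times n}$, which I expect to be the main point to justify carefully; it is a standard consequence of the holomorphic functional calculus, and once it is in hand the rest of the argument is routine bookkeeping with the spectral radius formula, commutativity of the relevant exponentials, and submultiplicativity of the Banach algebra norm.
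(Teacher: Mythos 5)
Your proof is correct. Note that the paper itself gives no proof of this proposition --- it is recalled verbatim from \cite[Prop.~1.7]{CurSas} --- so there is no in-paper argument to compare against; your route (identify the supremum with the spectral abscissa of $A$ in the unital Banach algebra $R^{n\times n}$ via Lemma~\ref{lemma_spectra}, then use the holomorphic spectral mapping theorem $\sigma(e^{tA})=e^{t\sigma(A)}$ for one direction and Gelfand's spectral radius formula plus the semigroup property and submultiplicativity for the other) is the standard argument and is essentially what the cited reference does, phrased there in terms of the uniformly continuous semigroup generated by the bounded operator $M_A$ on $R^n$. All the ingredients you invoke are available here: $R^{n\times n}$ is a unital Banach algebra under the operator norm of Definition~1.3, the spectral mapping theorem and the spectral radius formula hold in any (noncommutative) unital Banach algebra, and the compactness of $\sigma_{R^{n\times n}}(A)$ guarantees that the supremum is attained, so that ``$\sup<0$'' yields a uniform margin $\delta>0$.
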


The proof in Section \ref{Proof} of our main result above is
established by taking the Gelfand transform of our equation, and
showing that the pointwise solution is continuous.  Then we use the
Banach algebra operational calculus to ensure that this continuous
solution is actually the Gelfand transform of a matrix with entries
from the Banach algebra.

In Section~\ref{Applications} we discuss the applications of this
result to the control of spatially invariant systems.

\section{Proof of the main result}\label{Proof}

We will need the following two results. The first one gives sufficient
conditions on the matricial data for the classical $H^\infty $
algebraic Riccati equation to have a positive semi-definite
stabilizing solution, and moreover it says the smallest such solution
$P$ depends continuously on the matricial data.

\begin{proposition}
\label{prop2}
Let $\mS$ be the set of matrix tuples $(A,B,C,D_1,D_2,E)$ belonging to 
$\mC^{n\times n}\times \mC^{n\times m}\times \mC^{p\times n}\times
\mC^{p\times m}\times \mC^{p\times \ell}\times \mC^{n\times \ell}$  
such that 
\begin{enumerate}
\item $(A,B,C,D_1)$ is left invertible,
\item there exist matrices 
$F_1,F_2$ such that $A+BF_1$ is exponentially stable, and 
$$
\|(C+DF_1)(sI-A-BF_1)^{-1} (E+BF_2)+D_2+D_1F_2\|_\infty <\gamma,
$$ 
\item  $(A,B,C,D_1)$ has no invariant zeros on the imaginary axis, and 
\item  $\ker D_1=\{0\}$. 
\end{enumerate}
Then there exists a smallest positive semidefinite, solution
$P=P(A,B,C,D_1,D_2,E)\in \mC^{n\times n}$ of the Riccati equation
\eqref{eq_riccati_classical} such that $A_{\textrm{cl}}$ given by
\eqref{eq_closed_loop_A} is exponentially stable.  Moreover, the map
$(A,B,C,D_1,D_2,E)\mapsto P(A,B,C,D_1,D_2,E):\mS\rightarrow
\mC^{n\times n}$ is continuous.
\end{proposition}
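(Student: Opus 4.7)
The plan is to obtain existence from the classical theory and to derive continuity via the implicit function theorem, using the fact that the $P$-derivative of the Riccati operator at a stabilizing solution is the Lyapunov operator attached to $A_{\textrm{cl}}$.

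\emph{Existence.} Hypotheses (1)--(4) are precisely the assumptions under which the classical $H^\infty$ theory (Theorem~1.1 and its refinements in \cite{Sto}) guarantees a positive semidefinite solution $P$ of \eqref{eq_riccati_classical} with $A_{\textrm{cl}}$ exponentially stable; moreover, under (3)--(4), the psd stabilizing solution is in fact unique, so the ``smallest'' qualifier identifies $P$ unambiguously. This gives a well-defined map $\theta:=(A,B,C,D_1,D_2,E)\mapsto P(\theta)$ on $\mS$.

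\emph{Continuity via the implicit function theorem.} Fix $\theta_0\in\mS$, let $P_0=P(\theta_0)$, and write $A_{\textrm{cl},0}$ for the corresponding closed-loop matrix. Set
\[
\calF(P,\theta):=A^*P+PA+C^*C-\calL(P,\theta)^*\calM(\theta)^{-1}\calL(P,\theta),
\]
with $\calL(P,\theta)$ and $\calM(\theta)$ the block quantities in \eqref{eq_riccati_classical}. Invertibility of $D_1^*D_1$ from (4), combined with $\|D_2+D_1F_2\|<\gamma$ (obtained from (2) by taking $s\to\infty$), forces the Schur complement $D_2^*(I-D_1(D_1^*D_1)^{-1}D_1^*)D_2-\gamma^2 I$ to be strictly negative definite, so $\calM(\theta)$ is invertible in a neighborhood of $\theta_0$ and $\calF$ is jointly smooth there. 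A direct calculation — in which the two cross terms from differentiating $\calL^*\calM^{-1}\calL$ combine with $A^*\Delta+\Delta A$ via the identity $[B\;E]\calM^{-1}\calL(P_0)=A-A_{\textrm{cl},0}$ — yields
\[
\partial_P\calF(P_0,\theta_0)(\Delta)=A_{\textrm{cl},0}^*\Delta+\Delta A_{\textrm{cl},0}.
\]
Exponential stability of $A_{\textrm{cl},0}$ places the spectrum of this Lyapunov operator in the open left half plane, so it is invertible on $\mC^{n\times n}$. The implicit function theorem then furnishes an open neighborhood $U$ of $\theta_0$ and a continuous map $\widetilde P\colon U\to\mC^{n\times n}$ with $\widetilde P(\theta_0)=P_0$ and $\calF(\widetilde P(\theta),\theta)=0$ on $U$.

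\emph{Matching and main obstacle.} Since Hurwitzness is an open property of matrices in $\mC^{n\times n}$ and $\theta\mapsto A_{\textrm{cl}}(\theta,\widetilde P(\theta))$ is continuous, exponential stability of this closed-loop matrix persists on a possibly smaller neighborhood; hence $\widetilde P(\theta)$ is a stabilizing solution of the Riccati equation at $\theta$. Uniqueness of the stabilizing solution then forces $\widetilde P(\theta)=P(\theta)$, giving continuity of $P$ at $\theta_0$; since $\theta_0\in\mS$ was arbitrary, continuity on $\mS$ follows. I expect the main technical obstacle to be the derivative computation — checking cleanly that the two quadratic cross terms from $\calL^*\calM^{-1}\calL$ collapse into the Lyapunov operator at $A_{\textrm{cl},0}$ — with a secondary, citation-level issue of invoking uniqueness of the stabilizing solution from the $H^\infty$ literature.
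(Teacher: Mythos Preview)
Your argument is correct, but it proceeds along a different line than the paper. The paper does not apply the implicit function theorem to the Riccati map; instead it invokes the Hamiltonian-matrix characterization of the stabilizing solution: under (3)--(4) the associated Hamiltonian has no imaginary-axis eigenvalues, so its stable and unstable invariant subspaces are spectrally separated, and classical perturbation theory for such subspaces (Stewart \cite{Ste}) gives a continuous basis for the stable subspace, hence continuous dependence of $P$. Your route---computing $\partial_P\calF(P_0,\theta_0)(\Delta)=A_{\textrm{cl},0}^*\Delta+\Delta A_{\textrm{cl},0}$, using Hurwitzness of $A_{\textrm{cl},0}$ to invert this Lyapunov operator, and then identifying the IFT branch with $P(\theta)$ via uniqueness of the stabilizing solution---is equally valid and in fact mirrors exactly the Jacobian computation the paper carries out later in the proof of Theorem~\ref{main_theorem} when applying the Banach-algebra implicit function theorem (Proposition~\ref{prop3}). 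The invariant-subspace approach has the advantage that uniqueness of the stabilizing solution is built into the construction (it \emph{is} the stable graph subspace), whereas your approach needs it as a separate citation; on the other hand, your approach is more self-contained analytically and makes the link to the main theorem's Jacobian step transparent. Your verification that $\calM(\theta)$ is invertible (via the Schur complement and the feedthrough bound $\|D_2+D_1F_2\|<\gamma$ extracted from (2) at $s\to\infty$) is a nice touch that the paper leaves implicit.
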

\begin{proof} The proof is the same as that of
  \cite[Lemma~3.1]{StoSCL}, mutatis mutandis, except we have complex
  matrices instead of real ones, and transposes there have to be
  replaced by the Hermitian adjoints. We give repeat the proof here.
  The existence and uniqueness of the stabilizing solution is
  well-known; see \cite{Sto}. The continuity follows in a
  straightforward manner. The solution $P$ is associated with the
  stable subspace of a Hamiltonian matrix . Since this Hamiltonian
  matrix has no eigenvalues on the imaginary axis, the eigenvalues in
  the open left half plane and the open right half plane are
  separated, and the existence of a continuous basis for the stable
  subspace and hence the continuous dependence of the stabilizing
  solution of the $H^\infty$ algebraic Riccati equation can be found
  for example in \cite{Ste}.
\end{proof}

The next result we will need is the following version of the Implicit
Function Theorem in the context of Banach algebras (see
\cite[p.155]{Hay}), and this will be used in proving our main results
when we need to pass from continuous functions on $M(R)$ to elements
of $R$.

\begin{proposition}
\label{prop3}
Let $h_1,\dots, h_s$ be continuous functions on $M(R)$. Suppose that
$f_1,\dots f_r$ in $R$ and $
G_1(z_1,\dots, z_{s+r}),\dots, G_t(z_1,\dots,z_{s+r})
$ are holomorphic functions 
with $t\geq s$ defined on a neighbourhood of the joint spectrum
$$
\sigma(h_1,\dots, h_s, f_1,\dots, f_r)
:=
\{ (h_1(\varphi),\dots,h_s(\varphi), \widehat{f_1}(\varphi),\dots,
\widehat{f_r}(\varphi)):\varphi\in M(R)\},
$$
such that 
\begin{equation}
\label{eq_hayashi_cont_soln}
G_k(h_1,\dots, h_s, \widehat{f_1},\dots,\widehat{f_r})=0 
\textrm{ on }M(R) \textrm{ for }1\leq k\leq t.
\end{equation}
If the rank of the Jacobi matrix $\displaystyle
\frac{\partial(G_1,\dots, G_t)}{\partial(z_1,\dots, z_s)}$ is $s$ on
$\sigma(h_1,\dots, h_s, f_1,\dots, f_r)$, then there exist elements
$g_1,\dots, g_s$ in $R$ such that
$$
\widehat{g_1}=h_1,\dots,\widehat{g_s}=h_s .
$$
\end{proposition}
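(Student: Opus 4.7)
The plan is to reduce the statement to the multivariable holomorphic functional calculus for commutative unital Banach algebras (Arens--Calder\'on--Waelbroeck): for any $\Phi$ holomorphic on a neighbourhood of the joint spectrum $\sigma(f_1,\dots,f_r)\subset\mC^r$, there is a unique $\Phi(f_1,\dots,f_r)\in R$ whose Gelfand transform equals $\Phi\circ(\widehat{f_1},\dots,\widehat{f_r})$. Accordingly, it suffices to construct holomorphic functions $\Psi_1,\dots,\Psi_s$ on some open neighbourhood $U\supset\sigma(f_1,\dots,f_r)$ in $\mC^r$ with the property that $\Psi_j(\widehat{f_1}(\varphi),\dots,\widehat{f_r}(\varphi))=h_j(\varphi)$ for every $\varphi\in M(R)$; then $g_j:=\Psi_j(f_1,\dots,f_r)\in R$ automatically satisfies $\widehat{g_j}=h_j$, which is what we want.

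To build the $\Psi_j$'s, I would work locally. Fix $\varphi_0\in M(R)$, let $p:=(h_1(\varphi_0),\dots,h_s(\varphi_0),\widehat{f_1}(\varphi_0),\dots,\widehat{f_r}(\varphi_0))$, and use the rank-$s$ hypothesis on the Jacobi matrix to pick indices $k_1,\dots,k_s\in\{1,\dots,t\}$ for which the submatrix $(\partial G_{k_i}/\partial z_j)_{1\le i,j\le s}$ is invertible at $p$. The classical holomorphic implicit function theorem applied to the system $G_{k_1}=\cdots=G_{k_s}=0$ in $\mC^{s+r}$ yields a polydisc $V_p$ about $p$, a neighbourhood $W_p$ of $(\widehat{f_1}(\varphi_0),\dots,\widehat{f_r}(\varphi_0))$ in $\mC^r$, and holomorphic maps $\psi^p=(\psi^p_1,\dots,\psi^p_s):W_p\to\mC^s$ such that $(\psi^p(w),w)\in V_p$ is the unique zero of $G_{k_1},\dots,G_{k_s}$ inside $V_p$. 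Combined with \eqref{eq_hayashi_cont_soln} and the continuity of $(h_1,\dots,h_s)$ and $(\widehat{f_1},\dots,\widehat{f_r})$, local uniqueness forces $h_j(\varphi)=\psi^p_j(\widehat{f_1}(\varphi),\dots,\widehat{f_r}(\varphi))$ for $\varphi$ in a neighbourhood of $\varphi_0$ in $M(R)$.

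Exploiting the compactness of $M(R)$, finitely many such polydiscs $V_{p_1},\dots,V_{p_N}$ cover $\sigma(h_1,\dots,h_s,f_1,\dots,f_r)$. Patching the local sections $\psi^{p_i}$ on the overlaps of the $W_{p_i}$---where local uniqueness and the branch singled out by the continuous global section $(h_1,\dots,h_s)$ force them to coincide---produces the sought-after holomorphic functions $\Psi_1,\dots,\Psi_s$ on an open neighbourhood $U$ of $\sigma(f_1,\dots,f_r)$. A final application of the Arens--Calder\'on calculus to each $\Psi_j$ delivers the elements $g_j\in R$ with $\widehat{g_j}=h_j$, completing the argument.

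The delicate part is the global patching step. \emph{A priori}, the variety $\{G_1=\cdots=G_t=0\}$ may have several sheets projecting onto the same region of $\mC^r$, so a careless gluing of local sections could in principle yield a multi-valued function. What rescues single-valuedness is the existence of the continuous global section $(h_1,\dots,h_s)$ on $M(R)$, which selects a specific sheet at every point, together with the Jacobian nondegeneracy keeping distinct sheets locally separated; turning these topological inputs into an honest single-valued holomorphic function on a neighbourhood of $\sigma(f_1,\dots,f_r)$ is the technical heart of the argument, and the place where the reference \cite[p.155]{Hay} does the bulk of the work.
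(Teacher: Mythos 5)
The paper does not actually prove this proposition; it quotes it from \cite[p.155]{Hay}. Your proposal, however, has a genuine gap at its very first step: the reduction to finding holomorphic $\Psi_1,\dots,\Psi_s$ on a neighbourhood of $\sigma(f_1,\dots,f_r)\subset\mC^r$ with $\Psi_j\circ(\widehat{f_1},\dots,\widehat{f_r})=h_j$ is in general impossible, because $h_j$ need not factor through the map $\varphi\mapsto(\widehat{f_1}(\varphi),\dots,\widehat{f_r}(\varphi))$, which is not injective on $M(R)$. Concretely, take $R=C(\{a,b\})\cong\mC^2$, $s=r=t=1$, $f_1=0$, $h_1(a)=1$, $h_1(b)=-1$, and $G_1(z_1,z_2)=z_1^2-1-z_2$. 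All hypotheses hold: $G_1(h_1,\widehat{f_1})=0$ on $M(R)$ and $\partial G_1/\partial z_1=2z_1=\pm 2\neq 0$ on the joint spectrum $\{(1,0),(-1,0)\}$; the conclusion is true with $g_1=(1,-1)\in R$; yet no single-valued function $\Psi_1$ near $\sigma(f_1)=\{0\}$ can satisfy both $\Psi_1(0)=1$ and $\Psi_1(0)=-1$. The continuous section $(h_1,\dots,h_s)$ lives over $M(R)$, not over $\sigma(f_1,\dots,f_r)$, so it does not ``select a sheet'' over each point of $\mC^r$ as your patching step assumes.

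Even in situations where $(h_1,\dots,h_s)$ does factor through $(\widehat{f_1},\dots,\widehat{f_r})$, the gluing argument is incomplete: local uniqueness forces $\psi^{p_i}$ and $\psi^{p_j}$ to agree only on $\sigma(f_1,\dots,f_r)\cap W_{p_i}\cap W_{p_j}$, and the joint spectrum can be a thin (for instance totally disconnected) subset of $\mC^r$, so agreement there does not propagate to the open overlap and does not yield a well-defined holomorphic function on an open neighbourhood. The repairs needed are exactly what make Hayashi's theorem nontrivial: one adjoins finitely many further elements $f_{r+1},\dots,f_{r+q}$ of $R$ in the spirit of Arens--Calder\'on so that, after enlargement, the problematic fibres are separated and the relevant neighbourhoods can be taken polynomially convex, and one then resolves the sheet-selection and gluing obstructions by Oka--Cartan theory rather than by naive patching. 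Without these ingredients the proposal does not constitute a proof, although the intended use of the several-variable holomorphic functional calculus at the final step is the right idea.
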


We are now ready to prove our main result. 

\begin{proof}[Proof of Theorem~\ref{main_theorem}] If we fix a
  $\varphi\in M(R)$, then owing to the assumptions (A2)-(A6), we know
  that there is a unique smallest positive semidefinite solution in
  $\mC^{n\times n }$, which we will denote by $\Pi(\varphi)$, such
  that (suppressing the argument $\varphi$) 
\begin{eqnarray}
\label{riccati_1}
\quad \quad 0&=&(\widehat{A})^*\Pi+\Pi\widehat{A} + (\widehat{C})^*\widehat{C}
\\
\nonumber 
&&-\left[\begin{array}{cc} (\widehat{B})^*\Pi+(\widehat{D_1})^* \widehat{C}\\
(\widehat{E})^*\Pi +(\widehat{D_2})^* \widehat{C}\end{array}\right]^* 
\left[\begin{array}{cc} (\widehat{D_1})^*\widehat{D_1} & (\widehat{D_1})^* \widehat{D_2} \\ 
(\widehat{D_2})^* \widehat{D_1} & (\widehat{D_2})^* \widehat{D_2} -\gamma^2 I \end{array}\right]^{-1} 
\left[\begin{array}{cc} (\widehat{B})^*\Pi+(\widehat{D_1})^* \widehat{C}\\
(\widehat{E})^*\Pi+(\widehat{D_2})^* \widehat{C} \end{array}\right]
\end{eqnarray}
and 
$$
\calA_{\textrm{cl}}:=\widehat{A}-\left[\begin{array}{cc} \widehat{B} & \widehat{E}
  \end{array}\right]\left[\begin{array}{cc} (\widehat{D_1})^*\widehat{D_1}& (\widehat{D_1})^* \widehat{D_2} \\
    (\widehat{D_2})^* \widehat{D_1} & (\widehat{D_2})^* \widehat{D_2} -\gamma^2 I \end{array}\right]^{-1}
\left[\begin{array}{cc} (\widehat{B})^*\Pi+(\widehat{D_1})^* \widehat{C}\\ (\widehat{E})^*\Pi
    +(\widehat{D_2})^* 
    \widehat{C} \end{array}\right].
$$
is (pointwise, for each $\varphi \in M(R)$) exponentially stable.

Moreover, from Proposition~\ref{prop2}, it follows that the map
$\varphi\mapsto \Pi(\varphi)$ is continuous on $M(R)$.
In light of the assumption (A1) and \eqref{riccati_1}, we can hence 
conclude that there is a continuous solution $\Pi$ on the maximal ideal
space such that 
\begin{eqnarray*}
\quad \quad 0&=&(\widehat{A^\star})\Pi+\Pi\widehat{A} + (\widehat{C^\star})\widehat{C}
\\
&&-\left[\begin{array}{cc} \Pi \widehat{B}+\widehat{C^\star}\widehat{D_1}& 
\Pi \widehat{E} +\widehat{C^\star} \widehat{D_2}\end{array}\right]
\left[\begin{array}{cc} \widehat{D_1^\star} \widehat{D_1} & \widehat{D_1^\star} \widehat{D_2} \\ 
\widehat{D_2^\star} \widehat{D_1} & \widehat{D_2^\star} \widehat{D_2} -\gamma^2 I \end{array}\right]^{-1} 
\left[\begin{array}{cc} \widehat{B^\star}\Pi+\widehat{D_1^\star} \widehat{C}\\
\widehat{E^\star}\Pi+\widehat{D_2^\star} \widehat{C} \end{array}\right].
\end{eqnarray*}
Finally we will apply Proposition~\ref{prop3}. We have in our case
$s=n^2$, $t=n^2$, the $h_i$:s are the components of $\Pi$ and the
$f_i$:s are the components of $A,A^\star,B,B^\star,C, C^\star ,D_1,
D_1^\star, D_2, D_2^\star, E, E^\star$ (which are totally $r=2(n^2+
nm+pn+pm+p\ell+n\ell) $ in number).  The maps $G_1,\dots, G_{t=n^2}$
are the $n^2$ components of the map $\calG$
$$
\begin{array}{cccc}
(\Theta, U_1,U_2,V_1,V_2,W_1,W_2,X_1,X_2,Y_1,Y_2,Z_1,Z_2)\\
\rotatebox[origin=c]{270}{$\longmapsto$}\;\calG \\
U_2\Theta +\Theta U_1 +W_2 W_1 \phantom{\widehat{D^f}}\\
-\left[\begin{array}{cc}\Theta V_1 +W_2 X_1 & \Theta Z_1+ W_2 Y_1 \end{array}\right]
 \left[\begin{array}{cc} X_2 X_1 & X_2 Y_1 \\ Y_2 X_1 & Y_2 Y_1-\gamma^2 I\end{array}\right]^{-1} 
\left[\begin{array}{cc} V_2 \Theta +X_2 W_1 \\ Z_2 \Theta +Y_2 W_1 \end{array}\right].
\end{array}
$$
(In the above, we have the replacements of 
$$
A, A^\star,B,B^\star,C,C^\star,D_1,D_1^\star,D_2,D_2^\star, E,E^\star
$$ 
by the complex variables which are the components of 
$$
U_1,U_2,V_1,V_2,W_1,W_2,X_1,X_2, Y_1,Y_2, Z_1,Z_2,
$$
respectively.  The replacements of the components of $\Pi$ in the
Riccati equation are by the complex variables which are the components
of $\Theta$.)  Since the set of invertible matrices is open, it follows that 
$$
(X_1,X_2,Y_1,Y_2)\mapsto \left[\begin{array}{cc} X_2 X_1 & X_2 Y_1 \\ Y_2 X_1 & Y_2 Y_1-\gamma^2 I\end{array}\right]^{-1}
$$
is holomorphic on the joint spectrum of $(D_1,D_1^\star,
D_2,D_2^\star)$. Consequently the components $G_1,\dots, G_{n^2}$ of
above map $\calG$ is holomorphic on the joint spectrum. 

There is a continuous solution $\Pi$ on the maximal ideal space such
that for all $k$,
\begin{equation}
\label{eq_riccati_2}
G_k(\Pi, \widehat{A},\widehat{A^\star}, \widehat{B},\widehat{B^\star},\widehat{C}, \widehat{C^\star}, \widehat{D_1}, 
\widehat{D_1^\star},\widehat{D_2}, \widehat{D_2^\star},\widehat{E}, \widehat{E^\star})=0 
\end{equation}
on $M(R)$ (that is, condition \eqref{eq_hayashi_cont_soln} in
Proposition~\ref{prop3} is satisfied). So we now investigate the
Jacobian with respect to the variables in $\Theta$. The Jacobian with
respect to the $\Theta$ variables at the point
$$
\begin{array}{lr}
\Big(\Pi(\varphi), \;\widehat{A}(\varphi), \;
(\widehat{A}(\varphi))^*,\;\widehat{B}(\varphi), \; (\widehat{B}(\varphi))^*,
\;\widehat{C}(\varphi), \; (\widehat{C}(\varphi))^*,\;\\ 
\widehat{D_1}(\varphi), \; (\widehat{D_1}(\varphi))^*,
\;\widehat{D_2}(\varphi), \; (\widehat{D_2}(\varphi))^*,\;\widehat{E}(\varphi), \; (\widehat{E}(\varphi))^*\Big)
\end{array}
$$ 
can be verified to be the following linear transformation $\Lambda$ from
$\mC^{n^2}\rightarrow \mC^{n^2}$:
$$
\Theta \mapsto \Theta \calA_{\textrm{cl}}(\varphi) +(\calA_{\textrm{cl}}(\varphi))^* \Theta.
$$
The set of eigenvalues of $\Lambda$ consists of the numbers 
$$
-(\overline{\lambda}+\mu),
$$
where $\lambda$, $\mu$ belong to the set of eigenvalues of
$\calA_{\textrm{cl}}(\varphi)$; see for example \cite[Proposition~7.2.3]{Ber}.  But
since $\calA_{\textrm{cl}}(\varphi)$ is exponentially stable, all
its eigenvalues have a negative real part.  Hence
$-(\overline{\lambda}+\mu)\neq 0$ for all $\lambda$, $\mu$ belonging
to the set of eigenvalues of the matrix 
$\calA_{\textrm{cl}}(\varphi)$. Consequently, the map $\Lambda$ is invertible from
$\mC^{n^2}$ to $\mC^{n^2}$, and its rank is $n^2=s$. So by
Proposition~\ref{prop3}, there exists a $P\in R^{n\times n}$ such that
$\widehat{P}(\varphi)=\Pi(\varphi)$ for all $\varphi \in M(R)$. From
\eqref{eq_riccati_2}, it follows (using the fact that $R$ is
semisimple) that
\begin{eqnarray*}
\quad \quad 0&=&A^\star P+PA + C^\star C
\\
&&-\left[\begin{array}{cc} P B+C^\star D_1 & 
P E + C^\star D_2 \end{array}\right] 
\left[\begin{array}{cc} D_1^\star D_1 & D_1^\star D_2 \\ 
D_2^\star D_1 & D_2^\star D_2 -\gamma^2 I \end{array}\right]^{-1} 
\left[\begin{array}{cc} B^\star P+ D_1^\star C\\
E^\star P+ D_2^\star C \end{array}\right].
\end{eqnarray*}
From the property possessed by the pointwise solutions $\Pi(\varphi)$
($\varphi \in M(R)$) of the constant complex matricial Riccati
equations \eqref{riccati_1}, we have that for all $\varphi\in M(R)$,
all eigenvalues of $\calA_{\textrm{cl}}(\varphi)$ have a negative
real part. But the set-valued map taking a square complex matrix of
size $n\times n$ to its spectrum (a set of $n$ complex numbers) is a
continuous map; see for example \cite[II,\S 5, Theorem~5.14,
p.118]{Kat}. Since $M(R)$ is compact in the Gelfand topology (the
weak-$\ast$ topology induced on $M(R)$ considered as a subset of
$\calL(R;\mC)$), it follows that
$$
\sup \{ \textrm{Re}(\lambda):\lambda \textrm{ is an eigenvalue of }
\widehat{A_{\textrm{cl}}}(\varphi) \textrm{ for some } \varphi \in M(R)\} <0.
$$
From Proposition~\ref{prop_char_exp_stab}, it follows that
$A_{\textrm{cl}}$ is exponentially stable.  Finally, again by the
property possessed by the pointwise solution $\Pi$, we have that for
all $\varphi\in M(R)$, $\widehat{P}(\varphi)$ is positive
semidefinite.  This completes the proof of Theorem~\ref{main_theorem}.
\end{proof}

We observe that whether or not the assumption (A1) in
Theorem~\ref{main_theorem} holds is intimately related to the choice
of the involution $\cdot^\star$ in the Banach algebra $R$. For some
commutative Banach algebras with involutions, this is automatic,
namely if it is {\em symmetric}; see \cite{CurSas}.

\section{Application to spatially invariant
  systems}\label{Applications}

Finally, we mention the application of our main result to control
problems for spatially invariant systems introduced in
\cite{BamPagDah}. The analysis of spatially invariant systems can be
greatly simplified by taking Fourier transforms, see \cite{BamPagDah},
\cite{CurIftZwa}.  This yields systems described by multiplication
operators with symbols belonging to $L^\infty(\mT)$. The $H^\infty$
control design is to use the feedback $F$, which uses the bounded,
self-adjoint, stabilizing solution $P$ to the $H^\infty$ algebraic
Riccati equation on the Hilbert space $(L^2(\mT))^n$.  For the design
of implementable controllers it is important that the gain operator
have a spatially decaying property (see \cite{BamPagDah}).  This
translates into the mathematical question of when the $H^\infty$
algebraic Riccati equation has a stabilizing solution in a suitable
subalgebra (for example, $L^1(\mT)$ is a subalgebra of
$L^\infty(\mT)\subset {\mathcal L}(L^2(\mT))$).  So the spatially
decaying property now translates into finding suitable subalgebras of
$L^\infty(\mT)$, in particular, the weighted Wiener algebras. In
\cite[p.472-474]{CurSas}, an example of a large class of the Wiener
subalgebra of $L^\infty(\mT)$ having the symmetry property (which
leads to automatic satisfaction of the condition (A1) in our main
result) was given.

\medskip 

\noindent {\bf Acknowledgements:} The author thanks Ruth Curtain for
suggesting to the author the question of investigating if the result
from \cite{CurSas} is valid also for the $H^\infty$ algebraic Riccati
equation, and also for useful discussions on the classical $H^\infty$
problem.

\end{document}